\newtheorem{definicion}{Definition}
\newtheorem{theorem}{Theorem}
\newtheorem{lema}{Lemma}
\newtheorem{remark}{Remark}
\DeclareMathOperator{\diver}{div}
\DeclareMathOperator{\ds}{ds}
\DeclareMathOperator{\dt}{dt}
\DeclareMathOperator{\dr}{dr}
\DeclareMathOperator{\D}{d}
\newcommand{\R}{\mathbb{R}}
\title[Nonexistence of positive radial solutions for semipositone $\phi$-Laplacian]{Nonexistence of positive radial solutions for semipositone $\phi$-Laplacian problems with superlinear reaction term}
\author{Sigifredo Herr\'on, \ Emer Lopera and Diana S\'anchez}
\address{Sigifredo Herr\'on \newline
 Departamento de Matem\'aticas,
 Universidad Nacional de Colombia Sede Medell\'in,
 Medell\'in, Colombia}
\email{sherron@unal.edu.co}
\address{Emer lopera \newline
 Departamento de Matem\'aticas,
 Universidad Nacional de Colombia Sede Manizales,
 Ma\-ni\-zales, Colombia}
\email{edloperar@unal.edu.co}
\address{Diana S\'anchez \newline
 Departamento de Matem\'aticas,
 Universidad Nacional de Colombia Sede Manizales,
 Ma\-ni\-zales, Colombia}
\email{dmsanchezm@unal.edu.co}
\date{}
\begin{document}
\maketitle
\begin{center}
\textit{    This work is dedicated to Professor Shivaji, in recognition of his significant contributions to the field.}
\end{center}
\begin{abstract}
\noindent
The aim of this paper is to prove the nonexistence of positive radial solutions to the problem $-\Delta_\phi u=\lambda f(u)$, $x\in B_1(0)$, $u(x)=0$ on $|x|=1$, for  $\lambda>0$ sufficiently large. Here, $\phi$ is a continuous function, $\Delta_\phi$ denotes the $\phi$-Laplacian operator  which is defined by $\Delta_\phi (u):=\diver (\phi (|\nabla u|) \nabla u)$, and $B_1(0)$ is the unit ball in $\mathbb{R}^N$, with $N>1$. Furthermore, $f$ is a continuous, nondecreasing function such that $f(0)<0$, and its behavior at infinity is intimately related to $\phi$. Our findings are presented in a combined format, employing both an indirect argument and an energy analysis.
\end{abstract}
Keywords: Nonexistence; $\phi$-Laplacian; semipositone; positive radial solutions.

	MSC 2020: 35A01, 35A24, 35B09, 35B33, 35G31, 35J62.

\section{Introduction}
We study the nonexistence of positive solutions for the problems of the form \begin{equation}\label{prob-Cauchy}
		\left\{
		\begin{array}{rcl}
			-\Delta_\phi u &=&\lambda f (u) , \quad x\in B_1(0), \\ 
			u(x)&=&0 , \qquad \quad x\in \partial B_1(0),
		\end{array} 
		\right. 
	\end{equation}
	where  $\phi:\R \to [0,\infty)$ is a continuous function, differentiable in $\mathbb{R}\smallsetminus\{0\}$,   $\lambda >0$, $B_1(0)\subset \R^N$ is the unit ball, $N > 1$. Here $\Delta_\phi$ denotes the  $\phi$-Laplacian operator which is defined by $\Delta_\phi u:=\diver (\phi (|\nabla u|) \nabla u)$. The reaction term, $f$, is a continuous function on $[0, \infty)$ that satisfies additional conditions, which will be presented below.
     From now on, \(c_1, c_2, \ldots, C, C_1, C_2, \ldots\), denote positive constants independent of the solutions that may vary from line to line.

 We assume:
 \begin{itemize}
		\item [$(\boldsymbol{f_1})$] 
	$f\colon [0,\infty)\to\R$ is a continuous function,  with only one zero $u_0>0$, $f $ is nondecreasing and $f(0)<0$ (semipositone).
    \item [$(\boldsymbol{f_2})$] There exist  $p> 1$ and $\alpha\in (p-1,p^{*}) $ such that $\lim_{s\to \infty} \frac{f(s)} {s^{\alpha}} >0$, where $p^*$ is the critical exponent defined by $p^{*}=Np/(N-p)$ for $N>p$ and $p^*=\infty$ for $p\geqslant N$.
  \item [$(\boldsymbol{\varphi})$] The function $\phi$ is increasing in $(0, \infty)$ and even. If we define $\varphi:\R\to \R$ as \begin{equation}\label{varphi-phi}
    \varphi (s):=\phi (s)s,
\end{equation}  
 let us assume that there exist positive  constants $\hat{c}_1$, $\hat{c}_2$  such that \begin{equation}\label{hipot_phi_2}
			\hat{c}_2r ^{p-1} \leqslant \varphi (r) \leqslant \hat{c}_1r ^{p-1}, \quad \text{for all }\ r\geqslant 0 .
		\end{equation}
  	  \end{itemize}
Observe that $\varphi$ is increasing and odd. We also note that \eqref{hipot_phi_2} is equivalent to
\begin{eqnarray}\label{estim-varphi-invers}
 \hat{c}_1^{1/(1-p)}t^{1/(p-1)}\leqslant\varphi^{-1}(t)\leqslant \hat{c}_2^{1/(1-p)}t^{1/(p-1)}, \quad     \forall\,t\geqslant0.
\end{eqnarray}
 This implies, for $s\geqslant0$ and $t\geqslant0$,
 \begin{eqnarray}\label{varphi-inv-product}
     \left(\hat{c}_1^{-1}\hat{c}_2^2\right)^{1/(p-1)}\varphi^{-1}(s)\varphi^{-1}(t)\leqslant \varphi^{-1}(st)\leqslant \left(\hat{c}_2^{-1}\hat{c}_1^2\right)^{1/(p-1)}\varphi^{-1}(s)\varphi^{-1}(t).
 \end{eqnarray}
 
 Moreover, from $\boldsymbol{(f_2)}$ there exists a constant $k>0$ such that
 \begin{eqnarray}\label{fest^alfa}
     f(t)\geqslant kt^\alpha, \quad \forall\, t\geqslant\frac{u_0+U_0}{2},
 \end{eqnarray}	
 where $U_0>0$ is the unique zero of $F$ (clearly, $U_0>u_0$), with $F(u):=\int_{0}^{u}f(s)\ds$ (see Figure \ref{fandF}). Assertion \eqref{fest^alfa} can be established, for example, by using the definition of the limit and that $s\mapsto f(s)/s^\alpha$ is continuous on compact sets from away the origin.

\begin{figure}[h]
\centering
\includegraphics[width=8cm]{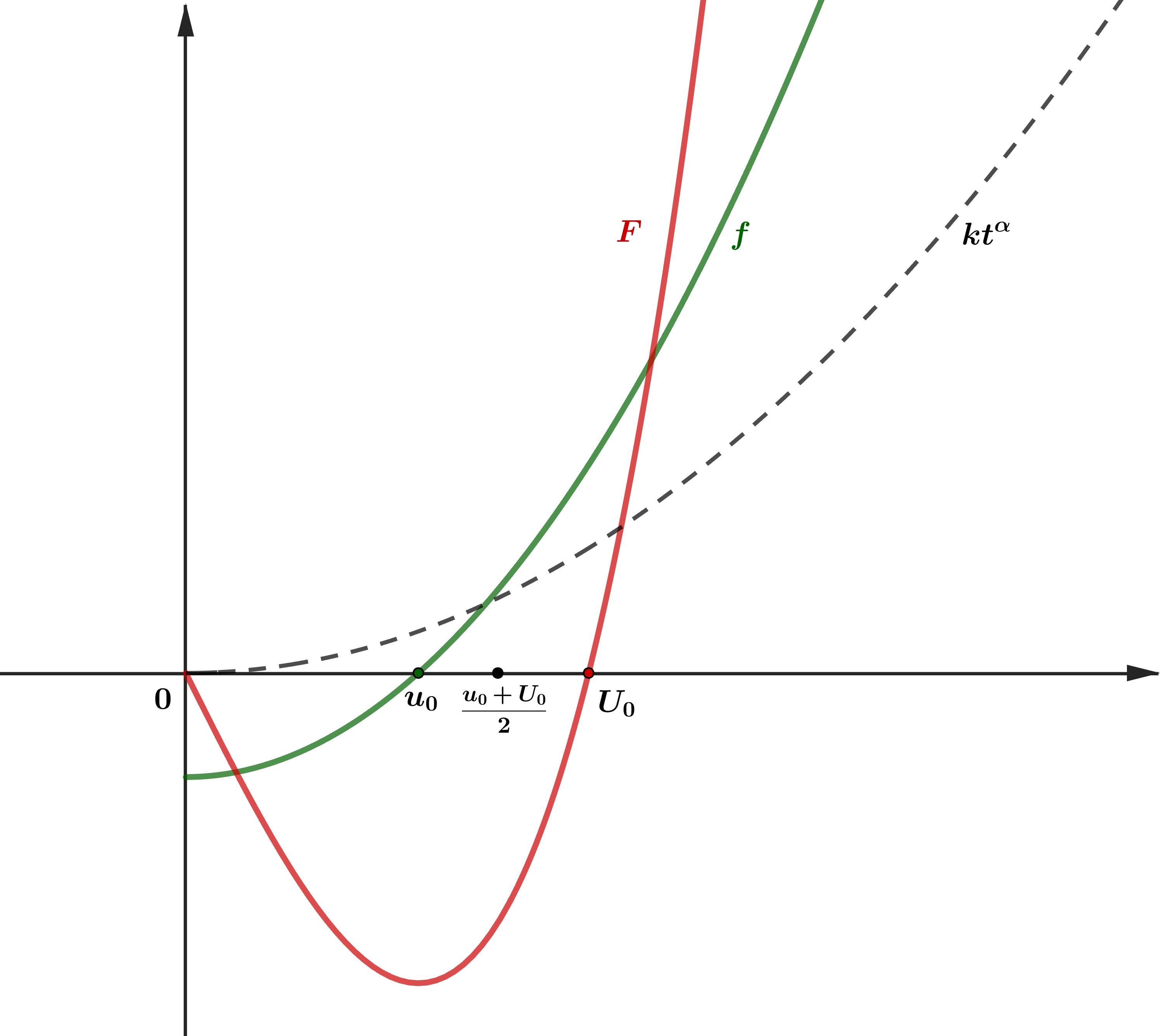}
\caption{Graphs of $f$ and $F$}\label{fandF}
\end{figure}

The primary result of this study is as follows.

\begin{theorem}\label{no-positive}
    Under hypotheses $(\boldsymbol{f_1}), (\boldsymbol{f_2})$ and    $(\boldsymbol{\varphi})$  there exists $\lambda _0>0$ such that for $\lambda>\lambda_0$, the problem \eqref{prob-Cauchy} has no positive radially symmetric solution.
\end{theorem}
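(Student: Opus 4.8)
\emph{Plan of proof.} I would argue by contradiction. Assume that for some $\lambda>0$ the problem \eqref{prob-Cauchy} has a positive radial solution; writing $u=u(r)$, $r=|x|\in[0,1]$, with $u>0$ on $[0,1)$ and $u(1)=0$, and using that $\phi$ is even, $u$ solves the one–dimensional problem
\begin{equation*}
 -\bigl(r^{N-1}\varphi(u'(r))\bigr)'=\lambda\,r^{N-1}f(u(r)),\qquad r\in(0,1),\qquad u'(0)=0,\ \ u(1)=0 .
\end{equation*}
The whole argument is driven by the energy
\begin{equation*}
 E(r):=G(u'(r))+\lambda F(u(r)),\qquad\text{where}\qquad G(s):=s\varphi(s)-\int_0^s\varphi(\tau)\,d\tau .
\end{equation*}
A short computation with the equation gives $E'(r)=-\tfrac{N-1}{r}\,u'(r)\varphi(u'(r))\leqslant 0$, because $t\varphi(t)=\phi(t)t^{2}\geqslant 0$; hence $E$ is nonincreasing on $(0,1)$. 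I will use only two crude facts about $G$: it is even and nonnegative, and $G(s)=s\varphi(s)-\int_0^s\varphi\leqslant s\varphi(s)\leqslant\hat{c}_1|s|^{p}$ for every $s$, by $(\boldsymbol{\varphi})$.

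Since $E$ is nonincreasing, $E(1)\leqslant E(0)$; because $E(0)=\lambda F(u(0))$ and $E(1)=G(u'(1))\geqslant 0$, this forces $F(u(0))\geqslant 0$, so $M:=u(0)=\|u\|_\infty\geqslant U_0$ (recall $F<0$ on $(0,U_0)$). Next I would prove that $u$ is strictly decreasing on $(0,1)$: as $f(M)>0$, integration of the equation gives $u'<0$ immediately to the right of $0$, and if $t^{*}\in(0,1)$ were the first critical point then $u$ would be decreasing on $(0,t^{*})$ and $\int_0^{t^{*}}s^{N-1}f(u(s))\,ds=0$, which is impossible unless $u(t^{*})<u_0$; but then $E(t^{*})=\lambda F(u(t^{*}))<0$, contradicting $E(t^{*})\geqslant E(1)\geqslant 0$. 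Hence $u$ decreases strictly from $M$ to $0$, and there is a unique $\sigma=\sigma_\lambda\in(0,1)$ with $u(\sigma)=\tfrac{u_0+U_0}{2}$ (it lies in $(0,1)$ since $M\geqslant U_0>\tfrac{u_0+U_0}{2}$).

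The heart of the matter is to bound $\sigma$ and $1-\sigma$ each by a constant multiple of $\lambda^{-1/p}$, with constants independent of $\lambda$ and of the amplitude $M$ (which is \emph{not} a priori bounded as $\lambda$ varies). On $(\sigma,1)$ one has $0<u(r)<\tfrac{u_0+U_0}{2}<U_0$, so $F(u(r))<0$ and the inequality $E(r)\geqslant E(1)\geqslant 0$ yields $G(u'(r))\geqslant\lambda|F(u(r))|$, hence $|u'(r)|\geqslant\bigl(\lambda|F(u(r))|/\hat{c}_1\bigr)^{1/p}$; changing variables $v=u(r)$,
\begin{equation*}
 1-\sigma=\int_\sigma^1 dr=\int_0^{(u_0+U_0)/2}\frac{dv}{|u'(r(v))|}\leqslant\Bigl(\frac{\hat{c}_1}{\lambda}\Bigr)^{1/p}\int_0^{(u_0+U_0)/2}\frac{dv}{|F(v)|^{1/p}}=:C_1\,\lambda^{-1/p},
\end{equation*}
and the last integral is finite because $F(v)\sim f(0)\,v$ as $v\to0^{+}$ with $f(0)<0$ and $p>1$ (so $1/p<1$), while $|F|$ stays positive up to $\tfrac{u_0+U_0}{2}$. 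On $(0,\sigma)$ one has $u(r)\geqslant\tfrac{u_0+U_0}{2}$, so \eqref{fest^alfa} gives $f(u(r))\geqslant k\,u(r)^{\alpha}$; since $u$ is decreasing, $\int_0^{r}s^{N-1}f(u(s))\,ds\geqslant\tfrac{k}{N}u(r)^{\alpha}r^{N}$, and the equation together with \eqref{estim-varphi-invers} gives
\begin{equation*}
 -u'(r)\geqslant C_0\,\lambda^{1/(p-1)}\,u(r)^{\alpha/(p-1)}\,r^{1/(p-1)},\qquad r\in(0,\sigma),\qquad C_0:=\hat{c}_1^{1/(1-p)}\Bigl(\tfrac{k}{N}\Bigr)^{1/(p-1)} .
\end{equation*}
Dividing by $u(r)^{\alpha/(p-1)}$, integrating over $(0,\sigma)$ and substituting $v=u(r)$,
\begin{equation*}
 C_0\,\tfrac{p-1}{p}\,\lambda^{1/(p-1)}\,\sigma^{p/(p-1)}\leqslant\int_{(u_0+U_0)/2}^{M}\frac{dv}{v^{\alpha/(p-1)}}\leqslant\int_{(u_0+U_0)/2}^{\infty}\frac{dv}{v^{\alpha/(p-1)}}<\infty,
\end{equation*}
the finiteness being precisely the superlinearity $\alpha>p-1$ (which makes $\alpha/(p-1)>1$). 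Thus $\sigma\leqslant C_2\,\lambda^{-1/p}$ with $C_2$ independent of $\lambda$ and $M$, and adding the two estimates, $1=\sigma+(1-\sigma)\leqslant(C_1+C_2)\lambda^{-1/p}$, i.e. $\lambda\leqslant(C_1+C_2)^{p}=:\lambda_0$. For $\lambda>\lambda_0$ this is a contradiction, so no positive radial solution exists.

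The delicate point, and the one I expect to require the most care, is exactly that $\|u\|_\infty$ is not controlled a priori: the monotone energy forces $u$ to drop steeply both near $\partial B_1(0)$ (where $f<0$) and near the origin (where $f$ is superlinear), and the proof hinges on turning these into bounds that are uniform in $M$. The two ingredients that make this work are the convergence of $\int_0^{\cdot}|F|^{-1/p}$ near $0$ (from $f(0)<0$ and $p>1$) and of $\int^{\infty}v^{-\alpha/(p-1)}\,dv$ (from $\alpha>p-1$); note that the subcriticality $\alpha<p^{*}$ in $(\boldsymbol{f_2})$ does not seem to be needed for this statement. The remaining issues are routine regularity matters — justifying the differentiation of $E$ and the changes of variables — which rest on $u\in C^{1}([0,1])$ and on $u'<0$ on $(0,1)$, so that $\varphi(u'(\cdot))$, and hence $u'$, is $C^{1}$ there by the equation.
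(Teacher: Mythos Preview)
Your argument is correct. Both your proof and the paper's rest on the same backbone --- the energy $E(r)=\lambda F(u(r))+u'\varphi(u')-\Phi(u')$, its monotonicity, the consequence $u(0)\geqslant U_0$, and the strict decrease of $u$ --- but they diverge in how the contradiction is extracted. The paper proves two separate localization lemmas by contradiction: first that $u$ hits the level $(u_0+U_0)/2$ at some $r_1\leqslant 1/2$ (via the same superlinearity inequality you use on $(0,\sigma)$), then that $u$ hits $u_0/c$ at some $r_2\geqslant 3/4$ (this time via the ODE integrated from $r$ to $1$, using $f(u_0/2)<0$, rather than the energy). With $r_2-r_1\geqslant 1/4$ the mean value theorem gives a point $r_3$ where $|u'(r_3)|$ is bounded by an absolute constant, and then $E(r_3)\leqslant \lambda M+\hat c_1(6U_0)^p\to-\infty$ with $M:=\max_{[u_0/c,(u_0+U_0)/2]}F<0$, contradicting $E\geqslant 0$. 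Your route is more direct and quantitative: you work with a single level $\sigma$, bound $\sigma$ and $1-\sigma$ each by $C\lambda^{-1/p}$ via integral estimates (your bound on $1-\sigma$ uses the energy inequality $G(u')\geqslant\lambda|F(u)|$ and the integrability of $|F|^{-1/p}$ near $0$, which the paper does not invoke), and sum. This yields an explicit $\lambda_0=(C_1+C_2)^p$ and makes transparent that the subcriticality $\alpha<p^{*}$ plays no role in the nonexistence statement. One small caveat: you appeal to $E'\leqslant 0$ before establishing $u'<0$ on $(0,1)$, so strictly speaking you need the computation of $E'$ at points where $u'(r)=0$ (handled in the paper by a direct difference-quotient argument, since $u''$ need not exist there); this is the ``routine regularity matter'' you flag, but it cannot be deferred until after the monotonicity of $u$ is known.
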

The nonexistence of positive solutions for Dirichlet problems like \eqref{prob-Cauchy} for the $p$-Laplacian, has been a topic of study for several years. Initial research was conducted in the positone case ($f(0) > 0$). For further information,  see \cite{N} and references therein on the semilinear case and \cite{Bonanno, hai-shivaji} in the quasilinear case ($p$-Laplacian). To the best of our knowledge, this is the first result about nonexistence concerning problem \eqref{prob-Cauchy} in the semipositone context. 
Consequently, we are confronted with a dual challenge, namely that the differential operator is more general than the $p$-Laplacian, and that the nonlinearity exhibits semipositone characteristics. 
Recently, Abbebe and Chhetri in \cite{AC} addressed the nonexistence problem for a system superlinear, subcritical in the semipositone case. 
\\
 Our procedure is based on an energy analysis, which involves computing the derivative of the energy function (see equation \eqref{def_energy1}), this allows us to obtain some monotonicity. However, since our solutions do not possess second-order derivatives at all points, we are compelled to compute the derivative using the very definition of the energy function. This was one of the challenges encountered during this work.  Our theorem extends a result obtained in \cite{CG}, where the authors considered a similar problem involving the differential operator  $p$-Laplacian. Other results on nonexistence can be found in \cite{hai, HL}.\\
Furthermore, with this work, we intend to continue the study of the problem \eqref{prob-Cauchy} for any positive value of the parameter $\lambda$. Indeed, it is known that for $\lambda>0$ small, problem \eqref{prob-Cauchy} has at least one positive solution, under some additional restriction on $\varphi$ (see \cite{HLS}).\\ 
This paper is structured as follows. Section 2 defines the notions of solution and energy associated with the problem. It also establishes the monotonicity properties of this energy. Section 3 is dedicated to the proof of the main result. In this section, the behavior of potential solutions of problem \eqref{prob-Cauchy} is analyzed, and the results are concluded at the end of this section.

\section{Preliminaries}
It is well-known that the radial version corresponding to  \eqref{prob-Cauchy} is given by
	\begin{equation}\label{prob-Cauchy-radial}
		\left\{
		\begin{array}{rll}
			\left( r^{N-1} \varphi (u')\right)'  &=-r^{N-1}\lambda f (u),  & 0<r<1, \\ 
			u'(0)&=0, & \\
			u(1)&=0, &
		\end{array} 
		\right. 
	\end{equation}
	where $u^\prime$ denotes the derivative of $u$ with respect to $r$. In addition,  a solution of the ode in \eqref{prob-Cauchy-radial} satisfies		\begin{equation}\label{form_int_u'} 		u' (r)=- \varphi ^{-1}\left( r^{1-N}\int_{0}^r \lambda t^{N-1} f (u)\dt \right),  \quad 0<r \leqslant 1,\end{equation}
	and thus  the differential equation can be written as
 \begin{equation}\label{edradial}
   (\varphi(u{'})){'}+\frac{N-1}{r}\varphi(u{'})=-\lambda f(u).  
 \end{equation}
  \begin{definicion}
     We say that $u$ is a solution of the the problem \eqref{prob-Cauchy-radial} if $u \in C^1([0,1])$, $\varphi (u') \in C^1((0, 1))$, $u$ satisfies the differential equation pointwise $u'(0)=0$ and $u(1)=0$.
 \end{definicion}

 Let us define  $ \Phi (t):=\int_0^{t} \varphi (s) \ds $. For any $u$, solution of problem \eqref{prob-Cauchy-radial},
 we define the energy associated with our problem as \begin{equation}\label{def_energy1}
		E(r)=\lambda F(u) +\varphi (u')u'-\Phi (u').
	\end{equation} 

 \begin{remark}{\label{deri_energia}}
       $E{'}(r)\leqslant 0$, for all $r>0$. Indeed, 
       for  $r>0$ where $u{'}(r)\neq 0,$ we have
     \begin{align*}
         E'(r)&=\lambda f(u)u{'}+(\varphi(u{'})){'}u{'}+\varphi(u{'})u{''}-\varphi(u{'})u{''}\\
         &=(\lambda f(u)+(\varphi(u{'})){'}))u{'}\\
         &=u{'}\left(-\frac{N-1}{r}\varphi(u{'})\right)\\ &=-\frac{N-1}{r}(u{'})^2\phi(u{'}).
         \end{align*}
         The last equality arises from  \eqref{edradial} and the fact that $s\varphi(s)=s^2\phi(s)$. On the other hand, for $r>0$ such that $u{'}(r)= 0,$ in light of the fact that  $u''(r)$ does not necessarily exist, we shall proceed as follows. First, we note that
\begin{align*}
    \left|\frac{\Phi(u{'}(r+h))}{h}\right|&=\frac{1}{|h|}\left|\int_0^{|u{'}(r+h))|} \varphi (s) \ds\right|\\
    &\leqslant \frac{\varphi(|u{'}(r+h))|)|u{'}(r+h))|}{|h|}\\
    &=\frac{|u{'}(r+h)|(|\varphi(u{'}(h+r))-\varphi (u{'}(r))|)}{|h|}.
\end{align*}
Therefore
\begin{align*}
    E{'}(r)&=\lim_{h \to 0}\frac{E(r+h)-E(r)}{h}\\ 
    &=\lim_{h \to 0} \left[\frac{\lambda(F(u(r+h)-F(u(r)))}{h}+\frac{u{'}(r+h)(\varphi(u{'}(h+r))-\varphi (u{'}(r)))}{h}\right.\\
    &\left. \hspace{2cm} -\frac{\Phi(u{'}(r+h))}{h}\right]\\ &=\lambda f(u)u{'}(r)=0.
\end{align*}
In both cases we obtain
\begin{equation}\label{derivada_energia}
        E'(r)=-\frac{N-1}{r}\varphi(u{'}(r))u{'}(r)\leqslant 0 \quad \forall r\in (0,1].
    \end{equation}
\end{remark}

\begin{remark}{\label{signo_var-phi}}
  For $t\geqslant 0$ we have $ \Phi (t)=\int_0^{t} \varphi (s) \ds \leqslant t\varphi (t)$.   Thus, since $\Phi$ is even then
    $\varphi(s)s-\Phi(s)\geqslant 0$. Therefore, $ E(1)\geqslant0$. Moreover, 
    since $E'(r) \leqslant 0 \mbox{ for all }r\in [0,1]$, by Remark \ref{deri_energia}, then $E(r)\geqslant 0$ for all $r\in [0,1]$.\end{remark}

\section{Proof of the main result}
\begin{lema}{\label{lemma1}}
    If $\lambda >0$ and $u$ is a positive solution of \eqref{prob-Cauchy-radial} then $u(0)\geqslant U_0$ (see Figure \ref{u>u_0}).  
\end{lema}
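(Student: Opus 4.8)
The plan is to read off the lower bound on $u(0)$ from the nonnegativity of the energy established in Remark \ref{signo_var-phi}, combined with the elementary shape of the potential $F$. Concretely, the first step is to evaluate the energy at the center of the ball. Since $u\in C^1([0,1])$ with $u'(0)=0$, and $\varphi(0)=0$, $\Phi(0)=\int_0^0\varphi=0$, the definition \eqref{def_energy1} collapses to
\[
E(0)=\lambda F(u(0))+\varphi(u'(0))u'(0)-\Phi(u'(0))=\lambda F(u(0)).
\]

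The second step is to invoke the monotonicity/positivity of $E$: by Remark \ref{deri_energia} we have $E'(r)\leqslant0$ on $(0,1]$ and by Remark \ref{signo_var-phi} we have $E(1)\geqslant0$, hence $E(r)\geqslant0$ for every $r\in[0,1]$; in particular $E(0)\geqslant0$. Since $\lambda>0$, this forces $F(u(0))\geqslant0$. The third step is to translate this into the claim using $(\boldsymbol{f_1})$: because $f$ is nondecreasing with a unique zero $u_0>0$ and $f(0)<0$, one has $f<0$ on $[0,u_0)$ and $f>0$ on $(u_0,\infty)$, so $F$ is strictly decreasing on $[0,u_0]$ and strictly increasing on $[u_0,\infty)$; together with $F(0)=0$ and $F(U_0)=0$ (with $U_0>u_0$) this yields $F<0$ on the whole interval $(0,U_0)$. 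Therefore $F(u(0))\geqslant0$ leaves only the possibilities $u(0)=0$ or $u(0)\geqslant U_0$, and since $u$ is a positive solution we have $u(0)>0$, so $u(0)\geqslant U_0$, as asserted.

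I do not expect a genuine obstacle in this lemma: it is a short corollary of the energy analysis. The only point that needs care is that the inequality $E\geqslant0$ is being used at $r=0$, and its proof already rests on the somewhat delicate one-sided computation of $E'$ at points where $u'$ vanishes carried out in Remark \ref{deri_energia}; once that is in hand, the rest is just calculus on $F$. It is also worth making explicit that ``positive solution'' is understood as $u>0$ on $[0,1)$, so that $u(0)>0$ may legitimately be used to discard the trivial alternative $u(0)=0$.
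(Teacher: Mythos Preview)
Your argument is correct and follows the same route as the paper: evaluate $E$ at $r=0$ using $u'(0)=0$, combine Remarks \ref{deri_energia} and \ref{signo_var-phi} to obtain $0\leqslant E(1)\leqslant E(0)=\lambda F(u(0))$, and then read off $u(0)\geqslant U_0$ from the sign profile of $F$. The paper's proof is just a terser version of what you wrote; your added lines explaining why $F(u(0))\geqslant 0$ forces $u(0)\geqslant U_0$ (ruling out $u(0)=0$ via positivity) merely spell out what the paper leaves implicit.
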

\begin{proof}
    By Remark \ref{deri_energia}, Remark \ref{signo_var-phi} and the fact that $E$ is a continuous function, we have 
    \[0\leqslant E(1)\leqslant E(0)=\lambda F(u(0)).\]
    Hence $u(0)\geqslant U_0$.
    \end{proof}

\begin{figure}[h]
\centering
\includegraphics[width=6cm]{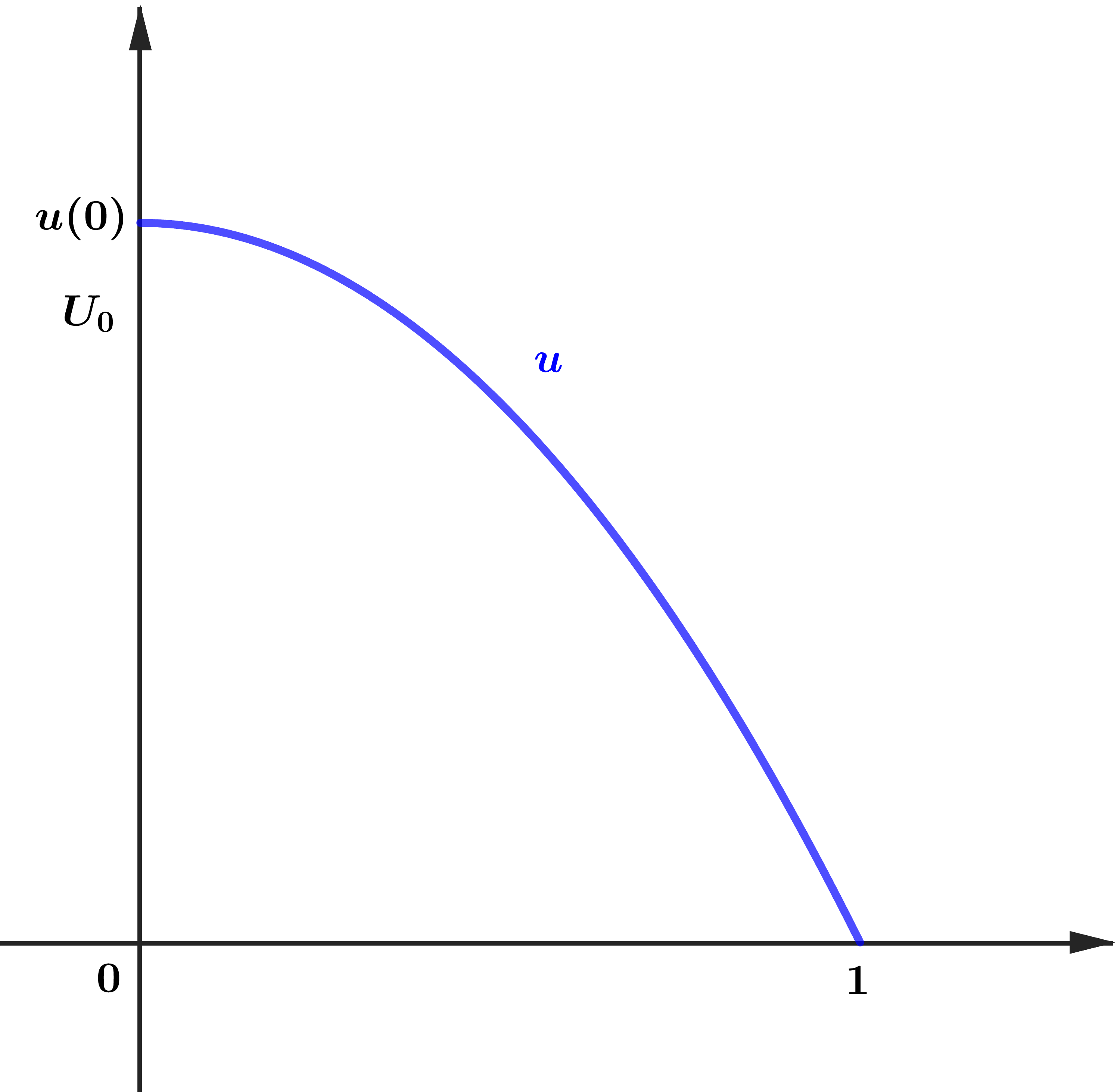}
\caption{Graph of $u$}\label{u>u_0}
\end{figure}
    
    \begin{lema}{\label{lemma2}}
        There exists $\lambda_1>0$ such that if $\lambda>\lambda_1$ and $u$ is a positive solution of \eqref{prob-Cauchy-radial}, then there exists $r_1:=r_1(\lambda)\in (0,1/2]$ such that $u(r_1)=(u_0+U_0)/2.$
    \end{lema}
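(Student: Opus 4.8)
The plan is to show that if $\lambda$ is large, a positive solution $u$ cannot stay above the level $(u_0+U_0)/2$ on the whole interval $[0,1/2]$, hence by continuity and the fact that $u(0)\geqslant U_0 > (u_0+U_0)/2$ (Lemma \ref{lemma1}) there must be a first crossing point $r_1\in(0,1/2]$. So I argue by contradiction: assume $u(r)\geqslant (u_0+U_0)/2$ for all $r\in[0,1/2]$.

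Under that assumption, on $[0,1/2]$ the reaction term is controlled from below by \eqref{fest^alfa}: $f(u(r))\geqslant k\,(u(r))^\alpha\geqslant k\,((u_0+U_0)/2)^\alpha =: m > 0$. Plugging this into the integral representation \eqref{form_int_u'},
\begin{equation*}
-u'(r)=\varphi^{-1}\!\left(r^{1-N}\int_0^r \lambda t^{N-1} f(u(t))\,\dt\right)\geqslant \varphi^{-1}\!\left(\lambda m\, r^{1-N}\int_0^r t^{N-1}\,\dt\right)=\varphi^{-1}\!\left(\frac{\lambda m}{N}\,r\right),
\end{equation*}
using that $\varphi^{-1}$ is increasing. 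Now I use the lower bound in \eqref{estim-varphi-invers}, $\varphi^{-1}(t)\geqslant \hat c_1^{1/(1-p)}t^{1/(p-1)}$, to get $-u'(r)\geqslant \hat c_1^{1/(1-p)}(\lambda m/N)^{1/(p-1)} r^{1/(p-1)}$. Integrating from $0$ to $1/2$ and using $u(0)-u(1/2)=-\int_0^{1/2}u'(r)\,\dr$, together with $u(0)\le$ some bound — actually I don't even need an upper bound on $u(0)$: I bound $u(1/2)$ below. Better: from $u(1/2)=u(0)+\int_0^{1/2}u'(r)\,\dr$ and the sign of $u'$, this gives no contradiction directly. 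Instead integrate the derivative bound to estimate the \emph{drop}: $u(0)-u(1/2)\geqslant \hat c_1^{1/(1-p)}(\lambda m/N)^{1/(p-1)}\int_0^{1/2}r^{1/(p-1)}\,\dr = C\lambda^{1/(p-1)}$ with $C>0$ independent of $\lambda$. Since $p>1$, $C\lambda^{1/(p-1)}\to\infty$ as $\lambda\to\infty$, so for $\lambda$ large enough the drop exceeds, say, $U_0$, forcing $u(1/2)< u(0)-U_0 \le$ — here I do need that $u(0)$ is not too large.

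To close this gap I will instead localize the argument: rather than integrating over $[0,1/2]$, note that the monotone decreasing drop estimate already shows $u$ decreases fast, and compare with the target value. The clean way: suppose for contradiction $u(r)>(u_0+U_0)/2$ for all $r\in[0,1/2]$; then the bound $-u'(r)\geqslant \hat c_1^{1/(1-p)}(\lambda m/N)^{1/(p-1)}r^{1/(p-1)}$ holds on all of $[0,1/2]$, so
\begin{equation*}
\frac{u_0+U_0}{2}<u(1/2)=u(0)+\int_0^{1/2}u'(r)\,\dr\leqslant u(0)-C\lambda^{1/(p-1)},
\end{equation*}
hence $u(0)> \frac{u_0+U_0}{2}+C\lambda^{1/(p-1)}$. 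Now I invoke the energy: by Remark \ref{deri_energia} and Remark \ref{signo_var-phi}, $0\leqslant E(1)\leqslant E(0)=\lambda F(u(0))$, which is consistent, so this alone does not bound $u(0)$ — I need a reverse bound on $u(0)$. That reverse bound comes from evaluating the energy differently, e.g. at $r_1$ or using that $E$ is nonincreasing between a point where $u'$ is large and $r=1$; more precisely $E(1/2)\leqslant E(0)$ gives $\lambda F(u(1/2))+\varphi(u'(1/2))u'(1/2)-\Phi(u'(1/2))\leqslant \lambda F(u(0))$, and since the middle terms are nonnegative (Remark \ref{signo_var-phi}), $\lambda F(u(1/2))\leqslant \lambda F(u(0))$, again consistent.

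I expect the genuine argument to bound $u(0)$ from above using the $C^1$ bound $|u'(r)|\leqslant \varphi^{-1}(\lambda r^{1-N}\int_0^r t^{N-1}|f(u)|\dt)$ together with an a priori sup bound on $\|f(u)\|_\infty$ coming from the energy inequality $\lambda F(u(0))=E(0)\geqslant E(r)\geqslant \lambda F(u(r))-\text{(nonneg)}$... Actually the cleanest fix, and the one I would pursue: I do \emph{not} need $u(0)$ bounded. If $u(r)>(u_0+U_0)/2$ on $[0,1/2]$ I instead apply the derivative estimate only on a subinterval $[\delta,1/2]$ and still get $u(\delta)-u(1/2)\geqslant C_\delta \lambda^{1/(p-1)}$; but this still involves $u(\delta)$. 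The truly robust route is: the estimate $-u'(r)\geqslant \hat c_1^{1/(1-p)}(\lambda m/N)^{1/(p-1)}r^{1/(p-1)}$ combined with $u\geqslant (u_0+U_0)/2 >0$ and $u(1)=0$ forces, integrating from $1/2$ to $1$ where $f(u)$ could change sign — hmm. I think the intended proof simply uses that $u$ is bounded above independently of $\lambda$ is \textbf{false} in general; rather, one shows $u(r_1)=(u_0+U_0)/2$ must occur in $(0,1/2]$ because otherwise $u$ stays large on $[1/2,1]$ too and then the integral $\int_0^1 \lambda t^{N-1}f(u)\,\dt\geqslant \lambda m\int_{1/2}^1 t^{N-1}\,\dt = c\lambda$, giving $|u'(1)|\geqslant \varphi^{-1}(c\lambda)\to\infty$, while $0=u(1)=u(0)-\int_0^1(-u')\geqslant (u_0+U_0)/2 - \int_0^1(-u')$, so $\int_0^1(-u')\geqslant (u_0+U_0)/2$, a \emph{fixed} number, contradicting $-u'(r)\geqslant \varphi^{-1}(c\lambda\,r)\to\infty$ pointwise via the integrated lower bound $\int_0^1(-u')\geqslant C\lambda^{1/(p-1)}\to\infty$? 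No — that's not a contradiction, $\int(-u')$ being large is fine since it equals $u(0)$. I see: the contradiction is with $u(1)=0$ together with $u\geqslant(u_0+U_0)/2$ on $[0,1]$, which says $\int_0^1(-u')= u(0)-u(1)=u(0)$, finite, but if additionally $u\geqslant (u_0+U_0)/2$ on $[1-\eta,1]$ then $u(1)\geqslant (u_0+U_0)/2>0$, directly contradicting $u(1)=0$ with no $\lambda$ needed. \textbf{The main obstacle}, then, is organizing this correctly: one must use largeness of $\lambda$ to force the crossing to happen early, specifically before $r=1/2$, not merely that a crossing exists; the quantitative mechanism is the drop estimate $u(a)-u(b)\geqslant C(b^{(p-1+1)/(p-1)}-a^{(p-1+1)/(p-1)})\lambda^{1/(p-1)}$ valid as long as $u$ stays above $(u_0+U_0)/2$ on $[a,b]$, combined with the a priori fact (to be extracted from Lemma \ref{lemma1} or a separate sup estimate) that $u(0)$ cannot exceed some $\lambda$-dependent quantity of strictly smaller order than $\lambda^{1/(p-1)}$ — and \emph{that} sup estimate on $u(0)$ is what I'd expect to be the hard, paper-specific input, likely proven via the energy $E$ and the subcriticality $\alpha<p^*$.
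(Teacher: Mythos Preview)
Your proposal has a genuine gap, and you correctly identify it yourself: replacing $f(u(r))$ by the constant lower bound $m=k((u_0+U_0)/2)^\alpha$ only yields
\[
u(0)-u(1/2)\geqslant C\lambda^{1/(p-1)},
\]
which forces $u(0)$ to be large but is perfectly consistent with $u>(u_0+U_0)/2$ on $[0,1/2]$. No energy argument or subcriticality assumption will rescue this, because there is in fact no $\lambda$-independent upper bound on $u(0)$ available (and none is used anywhere in the paper). Your final guess that the ``hard, paper-specific input'' is a sup estimate on $u(0)$ via $E$ and $\alpha<p^*$ is incorrect: subcriticality plays no role in this lemma.

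The missing idea is to keep the full nonlinear lower bound $f(u(r))\geqslant k\,u(r)^\alpha$ rather than freezing it at the constant $m$. An integration by parts in \eqref{form_int_u'} (using $u'<0$) gives
\[
r^{N-1}\varphi(u')\leqslant -\frac{\lambda k}{N}\,r^{N}u(r)^{\alpha},
\]
hence $-u'(r)\geqslant C\lambda^{1/(p-1)}r^{1/(p-1)}u(r)^{\alpha/(p-1)}$. Now separate variables:
\[
\int_{u(1/4)}^{u(0)} z^{-\alpha/(p-1)}\,\D z \geqslant C\lambda^{1/(p-1)}\int_0^{1/4} r^{1/(p-1)}\,\dr.
\]
Here is where the superlinearity $\alpha>p-1$ enters: since $\alpha/(p-1)>1$, the left-hand side is bounded above by $\int_{u(1/4)}^{\infty} z^{-\alpha/(p-1)}\,\D z = C'\,u(1/4)^{1-\alpha/(p-1)}$, a finite quantity that does not involve $u(0)$ at all. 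Thus
\[
u(1/4)^{-(\alpha-(p-1))/(p-1)}\geqslant C\lambda^{1/(p-1)},\qquad\text{i.e.}\qquad u(1/4)\leqslant C\lambda^{-1/(\alpha-(p-1))}\to 0,
\]
contradicting $u(1/4)>(u_0+U_0)/2$. The whole point is that convergence of $\int^\infty z^{-\alpha/(p-1)}\,\D z$ lets you discard the $u(0)$ endpoint; this is precisely what the constant-$m$ bound cannot do.
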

    \begin{proof}
        Let us assume, by contradiction, that there exist $\lambda_n \to \infty$ when $n\to \infty$ such that $u_{\lambda_n}(r)\neq (u_0+U_0)/2,$ for all $r\in [0,1/2]$. Moreover, in view of the Lemma \ref{lemma1} and the continuity of $u_{\lambda_n}$, we can assume without loss of generality that $u_{\lambda_n}(r) > (u_0+U_0)/2,$ for all $r\in [0, 1/2]$.   Using \eqref{fest^alfa} and \eqref{form_int_u'} we get $u_{\lambda_n}^\prime (s)<0$ for all $s\in [0,r]$. Therefore, for all $r\in [0,1/2]$ we have
        \begin{align*}
            r^{N-1}\varphi(u'_{\lambda_n})&=-\lambda_n \int_{0}^{r}s^{N-1}f(u_{\lambda_n}(s))\ds\\
            &\leqslant -\lambda_n\int_{0}^{r}s^{N-1}ku_{\lambda_n}^{\alpha}(s)\ds\\
            &=\frac{-\lambda_n k}{N}\int_{0}^{r}(s^{N}){'}ku_{\lambda_n}^{\alpha}(s)\ds\\
            & \leqslant \frac{-\lambda_n k}{N}\left(r^{N}u_{\lambda_n}^{\alpha}(r)-\int_{0}^{r}\alpha s^{N}u_{\lambda_n}^{\alpha-1}(s)u'_{\lambda_n}(s)\ds \right)\\
            & \leqslant \frac{-\lambda_n k}{N}r^{N}u_{\lambda_n}^{\alpha}(r).
        \end{align*}
        Therefore, due to \eqref{estim-varphi-invers},
        \begin{align}\label{u'<u}
            u{'}_{\lambda_n}(r) &\leqslant -\varphi^{-1}\left(\frac{\lambda_n k}{N}ru_{\lambda_n}^{\alpha}(r)\right) \nonumber\\
            &\leqslant -C\left(\frac{\lambda_n k}{N}ru_{\lambda_n}^{\alpha}(r)\right)^{1/(p-1)} \nonumber\\ 
            &= \frac{-C\lambda_n^{1/(p-1)}k^{1/(p-1)}r^{1/(p-1)}u_{\lambda_n}^{\alpha/(p-1)}}{N^{1/(p-1)}}.   
            \end{align}    
     Organizing and then integrating  the inequality \eqref{u'<u}  we have
    \begin{align*}
    \int_{u_{\lambda_n}(0)}^{u_{\lambda_n}(1/4)}z^{-\alpha/(p-1)}\D z &\leqslant \int_{0}^{1/4}-C\lambda_n^{1/(p-1)}r^{1/(p-1)}\dr
    \end{align*}
    and then
    \begin{equation*}
         \frac{(u_{\lambda_n}(1/4))^{1-\alpha/(p-1)}-(u_{\lambda_n}(0))^{1-\alpha/(p-1)}}{1-\alpha/(p-1)}\leqslant -C\lambda_n^{1/(p-1)}.
    \end{equation*}
    Hence
    \begin{align*}
    (u_{\lambda_n}(1/4))^{1-\alpha/(p-1)}&\geqslant C(\alpha/(p-1)-1)\lambda_n^{1/(p-1)}.
      \end{align*}
      Equivalently
\begin{equation}\label{u>lam}
        \frac{1}{u_{\lambda_n}(1/4)}\geqslant C\lambda_n^{\frac{1}{\alpha-(p-1)}}.
\end{equation}
      Since the right hand side of \eqref{u>lam} goes to infinity
      as $n\to\infty$, then $u_{\lambda_n}(1/4)\to 0$ as $n\to \infty$.
    This is a contradiction, as  $u_{\lambda_n}(1/4)>(u_0+U_0)/2$. This proves the lemma.
    \end{proof}
    \begin{lema}{\label{lema3}}
There exists $\lambda_2>0$ such that if $\lambda>\lambda_2$ and $u$ is a positive solution of \eqref{prob-Cauchy-radial}, then there exists $r_2:=r_2(\lambda)\in [3/4,1)$ such that $u(r_2)=u_0/c$ for any $c>2$.
    \end{lema}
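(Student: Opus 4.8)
The plan is to argue by contradiction about the location of the \emph{last} point at which a positive solution meets the level $u_0/c$, showing that this point cannot lie strictly before $3/4$ once $\lambda$ is large. Fix $c>2$ and let $u$ be a positive solution of \eqref{prob-Cauchy-radial}. By Lemma \ref{lemma1}, $u(0)\geqslant U_0>u_0>u_0/c$, while $u(1)=0<u_0/c$, so the set $\{r\in[0,1)\colon u(r)=u_0/c\}$ is nonempty and compact; let $\rho=\rho(\lambda)$ be its maximum. Then $\rho\in(0,1)$, $u(\rho)=u_0/c$, and it suffices to produce $\lambda_2>0$ with $\rho\geqslant 3/4$ whenever $\lambda>\lambda_2$, for then $r_2:=\rho\in[3/4,1)$ is the required point.

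The first step is to describe $u$ on $[\rho,1]$. On $(\rho,1)$ the solution never equals $u_0/c$, so by continuity and $u(1)=0$ one has $0<u<u_0/c<u_0$ there; hence $f(u)<0$ on $(\rho,1)$, since $f$ is nondecreasing with unique zero $u_0$. Therefore $r\mapsto r^{N-1}\varphi(u'(r))$ has derivative $-r^{N-1}\lambda f(u)>0$ on $(\rho,1)$ and is strictly increasing there; if it were nonnegative at some point of $[\rho,1)$ it would stay positive up to $r=1$, forcing $u$ to be increasing on an interval ending at $1$ and hence $u(1)>0$, a contradiction. So $u'<0$ on $[\rho,1)$: $u$ is strictly decreasing on $[\rho,1]$, and by \eqref{form_int_u'} the quantity
\[
 h(r):=\int_0^r\lambda t^{N-1}f(u(t))\,\dt=r^{N-1}\varphi(-u'(r))
\]
is positive on $[\rho,1)$ (so $h(1)\geqslant 0$ by continuity) and, since $h'(r)=\lambda r^{N-1}f(u)<0$ there, strictly decreasing on $[\rho,1]$.

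Now suppose, for contradiction, that $\rho<3/4$. For $r\in[7/8,1]$ we have $0\leqslant u(r)\leqslant u(7/8)\leqslant u(\rho)=u_0/c$, hence $|f(u(r))|\geqslant |f(u_0/c)|=:m_0>0$; using $h(7/8)=h(1)+\int_{7/8}^{1}\lambda t^{N-1}|f(u(t))|\,\dt$, the bound $|f(u(t))|\geqslant m_0$, and $h(1)\geqslant 0$, one gets
\[
 h(7/8)\;\geqslant\;\lambda\,m_0\,\frac{1-(7/8)^N}{N}\;=:\;\lambda\,C_5,\qquad C_5>0 .
\]
For $r\in[\rho,7/8]$, the monotonicity of $h$ together with $r^{N-1}<1$ give $h(r)/r^{N-1}\geqslant h(7/8)\geqslant\lambda C_5$, so from \eqref{estim-varphi-invers} and \eqref{form_int_u'},
\[
 |u'(r)|\;=\;\varphi^{-1}\!\left(\frac{h(r)}{r^{N-1}}\right)\;\geqslant\;\hat{c}_1^{1/(1-p)}\,(\lambda C_5)^{1/(p-1)}\;=:\;C_6\,\lambda^{1/(p-1)} .
\]
Since $\rho<3/4$, the interval $[3/4,7/8]$ lies in $[\rho,7/8]$, and $u$ is decreasing there, so
\[
 \frac{u_0}{c}=u(\rho)\geqslant u(3/4)\geqslant\int_{3/4}^{7/8}|u'(r)|\,\dr\geqslant\frac{C_6}{8}\,\lambda^{1/(p-1)} ,
\]
whence $\lambda\leqslant\bigl(8u_0/(cC_6)\bigr)^{p-1}=:\lambda_2$, contradicting $\lambda>\lambda_2$. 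Thus $\rho\geqslant 3/4$ for every $\lambda>\lambda_2$, and taking $r_2:=\rho$ finishes the proof; the constants $m_0,C_5,C_6,\lambda_2$ depend only on $N,p,\hat{c}_1,\hat{c}_2,u_0,c$ and $f$, not on $u$.

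The step I expect to be the main obstacle is the second one: one must genuinely establish that $u$ is monotone decreasing on \emph{all} of $[\rho,1]$ — not merely that $u(\rho)>u(1)$ — because this is exactly what keeps $h=r^{N-1}\varphi(-u')$ positive on $[\rho,1)$ and lets \eqref{form_int_u'} convert the lower bound $h\geqslant\lambda C_5$ into the pointwise estimate $|u'|\geqslant C_6\lambda^{1/(p-1)}$ on the fixed-length interval $[3/4,7/8]$; everything after that is a routine bookkeeping of constants. Note that only Lemma \ref{lemma1} is used here, not Lemma \ref{lemma2}.
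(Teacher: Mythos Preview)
Your proof is correct and shares the paper's underlying idea: assume by contradiction that $u$ stays below the level $u_0/c$ on a fixed interval near $r=1$, use that $f(u)$ is then bounded above by a negative constant, deduce a pointwise lower bound $|u'|\gtrsim\lambda^{1/(p-1)}$, and integrate to force $u$ to be large, contradicting the smallness assumption.

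The execution differs. The paper does not introduce the last crossing point $\rho$ nor establish monotonicity of $u$ on $[\rho,1]$; it simply integrates the ODE from $r$ to $1$ for $r\in[3/4,1)$, using only $\varphi(u'(1))\leqslant0$ (immediate from $u(1)=0$, $u>0$) and the contradiction hypothesis $u<u_0/2$ on $[3/4,1)$, to obtain
\[
r^{N-1}\varphi(u')\leqslant\lambda_n f(u_0/2)\,\frac{1-r^N}{N},
\]
hence $-u'\geqslant C\lambda_n^{1/(p-1)}(1-r)^{1/(p-1)}$, and then integrates to get $u(4/5)\geqslant C\lambda_n^{1/(p-1)}\to\infty$. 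Your route instead works with $h(r)=\int_0^r\lambda t^{N-1}f(u)\,\dt$ and its monotonicity on $[\rho,1]$, which yields a \emph{constant} lower bound on $|u'|$ over $[3/4,7/8]$, at the cost of the extra paragraph proving $u'<0$ on $[\rho,1)$. The paper's argument is shorter and avoids that step entirely; yours makes the decreasing structure near the boundary explicit, which is pleasant but not needed here. One minor discrepancy: as written your $\lambda_2$ depends on $c$ (through $m_0=|f(u_0/c)|$ and the final bound), whereas the paper's is uniform in $c>2$ because it immediately replaces $u_0/c$ by $u_0/2$; you can recover uniformity the same way, noting $m_0\geqslant|f(u_0/2)|$ and $u_0/c<u_0/2$.
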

 \begin{proof}
     Suppose, by contradiction, there exists $\{\lambda_n\}_{n=1}^{\infty}$, such that $\lambda_n\to \infty$, and a corresponding sequence of positive solutions $\{u_{\lambda_n}\}_{n=1}^{\infty}$ of \eqref{prob-Cauchy-radial} and a sequence of $\{c_n\}_{n=1}^{\infty}$, with $c_n>2$, such that $u_{\lambda_n} (r)\neq u_0/c_n$ for all $r\in [3/4,1)$. Using the fact that $u_{\lambda_n}(1)=0$ and the continuity of $u_{\lambda_n}$, we infer that  $u_{\lambda_n}(r)<u_0/c_n$ for all $r\in [3/4,1)$. Therefore, $u_{\lambda_n}(r)<u_0/2$ for all $r\in[3/4,1)$ since $c_n>2$.\\
     Integrating between $r$ and $1$ the differential equation  in \eqref{prob-Cauchy-radial} with $r\in [3/4,1)$ and taking into account that $f$ is nondecreasing, we get
\begin{align*}
r^{N-1}\varphi(u'_{\lambda_n})&=\int_r^1 \lambda_n s^{N-1}f(u_{\lambda_n})\ds+\varphi(u'_{\lambda_n}(1))\\
&\leqslant \lambda_n \int_r^1
s^{N-1}f(u_{\lambda_n})\ds\\ 
&\leqslant\lambda_n f(u_0/2)\left(\frac{1}{N}-\frac{r^{N}}{N}\right).\end{align*} 
Since $f(u_0/2)<0$, we obtain
\begin{align*}
\varphi(u{'}_{\lambda_n})&\leqslant \frac{\lambda_n}{N}\left(\frac{1}{r^{N-1}}-r\right)f(u_0/2)\\&\leqslant\frac{\lambda_n}{N}(1-r)f(u_0/2)\equiv-C\lambda_n(1-r).
\end{align*}
Then from \eqref{estim-varphi-invers}, $-u_{\lambda_n}'\geqslant C\left(\lambda_n(1-r)\right)^{1/(p-1)}=C\lambda_n^{1/(p-1)}(1-r)^{1/(p-1)}$. Hence 
\begin{align*}
    u_{\lambda_n}(r)&=-\int_r^1u_{\lambda_n}'(t)\dt \geqslant \int_r^1C\lambda_n^{1/(p-1)}(1-t)^{1/(p-1)}\dt\\ 
    &=C\lambda_n^{1/(p-1)}(1-r)^{p/(p-1)}.
    \end{align*}
    Taking  $r=4/5$ we have \[u_{\lambda_n}(4/5)\geqslant C\lambda_n^{1/(p-1)}\to \infty,\]
    as $n\to \infty$. This is impossible because $u_{\lambda_n}(4/5)<u_0/2.$
 \end{proof}
 We are ready to prove the main result of this section.
 \begin{proof}[Proof of Theorem \ref{no-positive}]
     Let us assume that the theorem is not true. Therefore, there exists a sequence of positive numbers $\{\lambda_n\}_{n=1}^{\infty}$, $\lambda_n \to \infty$, as $n\to \infty$ such that for each $\lambda_n$, there exists a positive solution $u_{\lambda_n}$ of \eqref{prob-Cauchy-radial}.  We claim that $u'_{\lambda_n}(r)<0$ for each $r\in (0,1)$ and $u'_{\lambda_n}(1)\leqslant 0$. Indeed, there exists $t_0\in (0, 1]$ such that $u(t_0)=u_0$. Without loss of generality, let us take $t_0$ as the smallest with this property. Then from \eqref{form_int_u'}, $u_{\lambda_n}'(t)<0$ for all $t\in (0, t_0]$. On the other hand, for $r\in [t_0,1],$ we also have  $u'_{\lambda_n}(r)<0$. In fact, let $r_0:=\min \{t\in [t_0, 1]: u'(t)\geqslant 0\}$. If $r_0<1$, then there exists $t_1\in (t_0, r_0]$ such that $u_{\lambda_n}'(t_1)=0$ and $u_{\lambda_n}(t_1)\leqslant u_0$. Then $E(t_1)<0$, which is a contradiction. Therefore $r_0=1$. That is, $u'_{\lambda_n}(r)<0$ for all $r\in (t_0, 1)$. Moreover, since $u'$ is continuous, $u'_{\lambda_n}(1)\leqslant 0$. This proves the claim. \\
     For every $n$, let $\lambda_1^*>0, r_1=r_1(\lambda_n)$ and $\lambda_2^*>0,  r_2=r_2(\lambda_n)$ be as in Lemmas \ref{lemma2} and \ref{lema3},
     respectively.\\ Let us fix $n$, such that $\lambda_n\geqslant \max\{\lambda_1^*,\lambda_2^*\}$.  By the mean value theorem there exists $r_3\in(r_1,r_2)$ such that
     \begin{align*}
         |u{'}_{\lambda_n}(r_3)|= \frac{|u_{\lambda_n}(r_2)-u_{\lambda_n}(r_1)|}{r_2-r_1}  \leqslant 
         4|u_0/c-(u_0+U_0)/2|\leqslant 6U_0,
         \end{align*}         
         since $c>2$ and $u_0<U_0$.  
         Letting  $M:=\max\{F(u)\colon u_0/c \leqslant u \leqslant (u_0+U_0)/2\} <0$ and using the assumption $\boldsymbol{(\varphi)}$, $u_{\lambda_n} $ is decreasing in $(r_1, r_2)$, $\Phi\geqslant0$ and $\varphi$ is odd,    
         \begin{eqnarray*}
             E(r_3)&=&\lambda_n F(u_{\lambda_n}(r_3)) +\varphi (u'_{\lambda_n}(r_3))u'_{\lambda_n}(r_3)-\Phi (u'_{\lambda_n}(r_3))\\
             &\leqslant & \lambda_n M+\hat{c}_1|u'_{\lambda_n}(r_3)|^{p}\\
             &\leqslant & \lambda_n M+\hat{c}_1(6U_0)^p.
         \end{eqnarray*}  
         Therefore,
         \[\lim_{n\to \infty} E(r_3)=-\infty ,\]         
which contradicts the fact that $E\geqslant 0$ (Remark \ref{signo_var-phi}) and this proves the theorem.
 \end{proof}

\section*{Acknowledgments}
We would like to express our gratitude to Ratnasingham Shivaji for encouraging us to address this problem. Diana Sánchez would like to express her gratitude to Maya Chhetri for providing her with some references that were essential in the development of this work. Authors Sigifredo Herr\'on and Diana S\'anchez were partially supported by Facultad de Ciencias, Universidad Nacional de Colombia, Sede Medell\'\i n– Facultad de Ciencias – Departamento de Matem\'aticas – Grupo de investigaci\'on en Matem\'aticas de la Universidad Nacional de Colombia Sede Medell\'\i n. Proyecto de facultad:   An\'alisis no li\-ne\-al aplicado a problemas mixtos en ecuaciones diferenciales parciales,  c\'odigo Hermes 60827.  
Emer Lopera was supported by Facultad de Ciencias Exactas y Naturales, Universidad Nacional de Colombia, Sede Manizales – Facultad de Ciencias – Departamento de Matemáticas – Grupo de investigación: Análisis Matemático AM de la Universidad Nacional de Colombia-Sede Manizales. Project: Problemas en ecuaciones elípticas no lineales, HERMES code 63271.

 \end{document}